\numberwithin{equation}{section}
\newtheorem{thm}{Theorem}
\newtheorem{lemma}[thm]{Lemma}
\newtheorem{rmk}[thm]{Remark}
\newtheorem{prop}[thm]{Proposition}
\theoremstyle{definition}
\newtheorem{example}[thm]{Example}
\begin{document}
\title{An elementary example of Sard's Theorem sharpness}

\date{\today\ (\the\time)}

\author{Juan Ferrera}
\address{IMI, Departamento de An{\'a}lisis Matem{\'a}tico y Matem{\'a}tica Aplicada,
Facultad Ciencias Matem{\'a}ticas, Universidad Complutense, 28040, Madrid, Spain}
\email{ferrera@mat.ucm.es}

\keywords{Morse-Sard Theorem. Cantor set}

\begin{abstract}
In this note we define a $C^1$ function $F:[0,M]^2\to [0,2]$ that satisfies that its set of critical values 
has positive measure. This function provides an  example, easier than those that usually appear in the
literature, of how the order of differentiability required in 
Sard's Theorem cannot be improved
\end{abstract}

\maketitle

The classical Sard's Theorem, see \cite{M} and \cite{Sard}, asserts that a $C^{n-m+1}$ function $F:\mathbb{R}^n\to \mathbb{R}^m$
satisfies that the set of its critical values has measure $0$. A classical example of Whitney (see \cite{W})
shows that the result is sharp within the classes of functions $C^k$. Specifically he built  a 
$C^1$ function
$F:\mathbb{R}^2\to \mathbb{R}$ such that all its values are critical. In this note we present an 
easier example of that fact. This example has another interesting property, namely 
the function $F$ has $\frac{1}{2}$-H\"older continuous derivatives, that is $F$ is $C^{1,\frac{1}{2}}$.
This is a well known fact, indeed Norton (see \cite{N}) provides examples of functions $C^{1,s}$ whose
set of critical values contains and interval, for every $s<1$, anyway we consider that the example that we provide
is much easier. Moreover, this example is self contained in the sense that it only requires the knowledge of the Cantor set
and elementary calculus; this is important since the examples that usually appear require deeper results as Whitney's 
Extension Theorem for instance.

It is interesting to compare these examples with 
the improvement of Sard's Theorem due
to Bates, see \cite{B}, that affirms that in order
to guarantee that the set of critical values is a null set we only have to require that the function $F$ be $C^{n-m,1}$.

Before to present the example,
in order to fix notation, we define the Ternary Cantor Set $C$ and we show two of its properties. 
We choose a way to introduce $C$ that will be useful while defining the goal function.   
$$
C=[0,1]\smallsetminus \bigcup_{n=1}^{\infty}I_n, \qquad I_n=\bigg( \bigcup_{k=1}^{2^{n-1}}I_n^k\bigg)
$$
where the $2^{n-1}$ intervals $I_n^k$, of length $\frac{1}{3^n}$, are centered in the middle points of the connected
components of $C\smallsetminus  \big( I_1\cup \dots \cup I_{n-1}\big)$.
We start remembering a well known fact

\begin{prop}\label{P1}
The Ternary Cantor set $C$ satisfies that $C+C=[0,2]$.
\end{prop}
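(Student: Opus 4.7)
The plan is to prove $C+C=[0,2]$ via the ternary-digit description of the Cantor set. First I would verify, directly from the inductive construction given in the paper, that $C$ equals the set of sums $\sum_{n\ge 1} a_n\, 3^{-n}$ with each $a_n \in \{0,2\}$: at stage $n$, the middle third $I_n^k$ removed from a component of $[0,1]\setminus(I_1\cup\cdots\cup I_{n-1})$ consists precisely of the points whose $n$-th ternary digit is forced to be $1$ (modulo the usual endpoint ambiguities).

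The inclusion $C+C\subseteq[0,2]$ is immediate from $C\subseteq[0,1]$. For the reverse inclusion I would take $s\in[0,2]$, consider $s/2\in[0,1]$, and write its standard ternary expansion $s/2=\sum_{n\ge 1} d_n\, 3^{-n}$ with $d_n\in\{0,1,2\}$. Then $s=\sum_{n\ge 1} 2d_n\, 3^{-n}$, and for each $n$ I can split $2d_n=a_n+b_n$ with $a_n,b_n\in\{0,2\}$ by taking $(a_n,b_n)$ equal to $(0,0)$, $(0,2)$, or $(2,2)$ according as $d_n$ equals $0$, $1$, or $2$. Setting $a=\sum a_n\, 3^{-n}$ and $b=\sum b_n\, 3^{-n}$ yields $a,b\in C$ with $a+b=s$.

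The main (modest) obstacle is the identification of $C$ with ternary sequences using only digits $0$ and $2$, which is standard but deserves care given the paper's concrete construction via the intervals $I_n^k$. I would handle it by induction on $n$: the $2^n$ connected components of $[0,1]\setminus(I_1\cup\cdots\cup I_n)$ are precisely the intervals $\bigl[\sum_{k=1}^n a_k 3^{-k},\, \sum_{k=1}^n a_k 3^{-k}+3^{-n}\bigr]$ indexed by $(a_1,\dots,a_n)\in\{0,2\}^n$, so any $x\in C$ arises as the nested intersection of such intervals and hence as a sum of the stated form. No carrying issues arise in the construction of $a$ and $b$ above, since the sums $a_n+b_n$ already lie in $\{0,2,4\}$ and each term is handled independently.
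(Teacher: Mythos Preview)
Your argument is correct and is essentially the same proof the paper gives (attributed to Shallit): halve the target, expand in base three, and split each doubled digit into a pair from $\{0,2\}$. The only difference is cosmetic (you add an explicit justification that $C$ coincides with the set of ternary sequences with digits in $\{0,2\}$, which the paper simply takes as known).
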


Indeed, it was Steinhaus who first proved this result in 1917. However, for the sake
of self containedness, we present a short proof due to Shallit, \cite{Sh} that we found in
\cite{ART}.
\begin{proof}
Given $u\in [0,2]$, we consider the basis three expansion of $u/2$,
$$
\frac{u}{2}=\sum_{n=1}^{\infty}\frac{\varepsilon_n}{3^n} \qquad \varepsilon_n\in \{ 0,1,2\} .
$$
If $\varepsilon_n=0$, we define $\alpha_n=\beta_n=0$, 
if $\varepsilon_n=1$, then $\alpha_n=2$ and $\beta_n=0$, finally if
$\varepsilon_n=2$, then $\alpha_n=\beta_n=2$.  We have that $\alpha_n+\beta_n=2\varepsilon_n$.
The numbers
$$
x=\sum_{n=1}^{\infty}\frac{\alpha_n}{3^n}\quad \textrm{and}\quad y=\sum_{n=1}^{\infty}\frac{\beta_n}{3^n}
$$
belong to $C$ since $\alpha_n\neq 1\neq \beta_n$ for every $n$. It is immediate that
$x+y=2\frac{u}{2}=u$. This proves that $[0,2]\subset C+C$ which is enough since the other inclusion is trivial.
\end{proof}

Another immediate property which is a consequence of the measure zero of the Cantor set
is the following one:

\begin{prop}\label{P2}
Every $x\in C$ satisfies
$$
x=\sum \mathcal{L}(I_n^k)
$$
where the sum ranges over all the intervals satisfying that $\sup I_n^k\leq x$.
\end{prop}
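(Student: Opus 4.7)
The plan is to rewrite the right-hand sum as a Lebesgue measure and then exploit the fact that $C$ has measure zero. The key observation is that, because $x\in C$, the condition $\sup I_n^k\leq x$ singles out exactly those removed intervals that sit entirely to the left of $x$.

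First I would verify this equivalence: for $x\in C$, one has $\sup I_n^k\leq x$ if and only if $I_n^k\subset[0,x]$. The direction ``$\Leftarrow$'' is immediate; for ``$\Rightarrow$'', since $x\in C$ we have $x\notin I_n^k$, so an open interval $I_n^k$ whose supremum is at most $x$ must in fact be contained in $[0,x]$. Combined with the defining identity $[0,1]\smallsetminus C=\bigcup_{n,k}I_n^k$, this gives
$$
\bigcup_{\sup I_n^k\leq x}I_n^k\;=\;[0,x]\smallsetminus C.
$$

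Next, using that the intervals $I_n^k$ are pairwise disjoint, countable additivity of Lebesgue measure yields
$$
\sum_{\sup I_n^k\leq x}\mathcal{L}(I_n^k)\;=\;\mathcal{L}\bigl([0,x]\smallsetminus C\bigr)\;=\;x-\mathcal{L}\bigl([0,x]\cap C\bigr),
$$
and I would conclude by invoking the standard computation $\sum_{n=1}^{\infty}2^{n-1}/3^{n}=1$, which shows $\mathcal{L}(C)=0$ and hence kills the last term.

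I do not expect any real obstacle: the content of the proposition is a bookkeeping translation of the null-measure property of $C$, and the only point requiring any care is the elementary equivalence in the first step, which crucially uses that $x$ itself lies in $C$ and therefore in none of the open intervals $I_n^k$.
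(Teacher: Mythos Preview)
Your argument is correct and matches the paper's own treatment, which offers no formal proof but simply presents the proposition as an immediate ``consequence of the measure zero of the Cantor set''---precisely what your computation spells out. One cosmetic point: the equivalence $\sup I_n^k\le x\iff I_n^k\subset[0,x]$ is automatic for every $x\in[0,1]$ and needs no hypothesis; the place where $x\in C$ (hence $x\notin I_n^k$) is genuinely used is to rule out an interval $I_n^k$ straddling $x$, i.e.\ to ensure that every $I_n^k$ meeting $[0,x]$ already lies entirely inside it, and that is what yields the displayed set identity $\bigcup_{\sup I_n^k\le x}I_n^k=[0,x]\smallsetminus C$.
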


We proceed to define a Cantor type set. Let 
$$
M=\sum_{n=1}^{\infty}\frac{2^{n-1}}{\big( 3^{2/3}\big) ^n}=
\frac{1}{3^{2/3}-2}, 
$$
we define $A\subset [0,M]$ as the Cantor set, but taking instead of the interval $I_n^k$, 
new intervals
$J_n^k$ of length $\big( 3^{2/3}\big) ^{-n}$. Observe that the intervals $J_n^k$ are
pairwise disjoint,
hence 
$$
A=[0,M]\smallsetminus J ,\quad \textrm{where}\ J=\bigcup_{n=1}^{\infty}J_n
\quad \textrm{and}\ 
J_n=\bigcup_{k=1}^{2^{n-1}}J_n^k
$$
which implies that $\mathcal{L}(J_n)=2^{n-1}\big( 3^{2/3}\big) ^{-n}$
and $\mathcal{L}(J)=M$. Hence $\mathcal{L}(A)=0$.

We define a function $g:[0,M]\to \mathbb{R}$ as $g(x)=0$ for every $x\in A$, if
$x\notin A$ then $x\in J_n^k$ for some $n$ and $k$. We define $g$ on each $J_n^k$
by
$$
g(x)=\frac{4}{\mathcal{L}(J_n^k)^{\frac{1}{2}}}dist(x,\partial J_n^k)
$$ 

\begin{lemma}\label{L3}
$g:[0,M]\to \mathbb{R}^+$ satisfies the following conditions:
\begin{enumerate}
  \item $\max \{ g(x): x\in J_n^k\} =2(\mathcal{L}(J_n^k))^{\frac{1}{2}}$
 \item $g(x)=0$ in an only if $x\in A$
  \item 
  $$
\int_{J_n^k}g(x)dx=(\mathcal{L}(J_n^k))^{\frac{3}{2}}=\frac{1}{3^n}
\quad \textrm{and}\ \int_0^Mg(x)dx=1.
$$
\item $g$ is  $\frac{1}{2}$-H\"older continuous.   
\end{enumerate}
\end{lemma}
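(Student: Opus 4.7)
The plan is to verify (1), (2), (3) by direct computation from the definition, and then treat (4) as the real content of the lemma.

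For (1), on each $J_n^k$ the function $g$ is a symmetric tent, so $\mathrm{dist}(x,\partial J_n^k)$ is maximized at the midpoint, where it equals $\mathcal{L}(J_n^k)/2$; multiplying by $4/\mathcal{L}(J_n^k)^{1/2}$ gives $2\mathcal{L}(J_n^k)^{1/2}$. For (2), on $A$ the value is $0$ by definition, while for $x$ in the open interval $J_n^k$ we have $\mathrm{dist}(x,\partial J_n^k)>0$, so $g(x)>0$. For (3), the integral of a tent over an interval of base $L$ and height $h$ is $Lh/2$; here $L=\mathcal{L}(J_n^k)=3^{-2n/3}$ and $h=2\mathcal{L}(J_n^k)^{1/2}$, yielding $\mathcal{L}(J_n^k)^{3/2}=3^{-n}$. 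Summing over the $2^{n-1}$ components of $J_n$ and over $n$ gives a geometric series $\sum_{n\ge1}2^{n-1}3^{-n}=1$, and the contribution from $A$ is zero since $\mathcal{L}(A)=0$.

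The main work is (4). I will show that $|g(x)-g(y)|\le C|x-y|^{1/2}$ for some absolute constant $C$, by splitting into three cases. First, on a single $J_n^k$ of length $L:=\mathcal{L}(J_n^k)$, the function $g$ is $\frac{4}{L^{1/2}}$-Lipschitz, so
\[
|g(x)-g(y)|\le \frac{4}{L^{1/2}}|x-y|=4|x-y|^{1/2}\cdot\Bigl(\frac{|x-y|}{L}\Bigr)^{1/2}\le 4|x-y|^{1/2},
\]
because $|x-y|\le L$. Second, suppose $x\in A$ and $y\in J_n^k$. Since $A\cap J_n^k=\emptyset$ we have $|x-y|\ge \mathrm{dist}(y,\partial J_n^k)=\tfrac{L^{1/2}}{4}g(y)$. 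If $|x-y|\le L$, the same computation as above gives $|g(x)-g(y)|=g(y)\le 4|x-y|^{1/2}$; otherwise $|x-y|>L$ and $|g(x)-g(y)|=g(y)\le 2L^{1/2}<2|x-y|^{1/2}$.

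The hardest subcase is $x\in J_n^k$, $y\in J_m^\ell$ with $(n,k)\ne(m,\ell)$. Here the disjointness of the components, together with the fact that each $J_n^k$ is an open interval removed from $[0,M]$, guarantees that the closed interval between $x$ and $y$ meets $A$: one can take the endpoint $a\in\partial J_n^k\subset A$ lying on the side of $x$ facing $y$, and similarly $b\in\partial J_m^\ell\subset A$. Then $g(a)=g(b)=0$ and $|x-a|,|y-b|\le|x-y|$, so the triangle inequality together with the previous case gives
\[
|g(x)-g(y)|\le g(x)+g(y)\le 4|x-a|^{1/2}+4|y-b|^{1/2}\le 8|x-y|^{1/2}.
\]
The obstacle here is purely bookkeeping: the one-dimensional order on $[0,M]$ and the fact that the removed intervals are open together provide the boundary points in $A$ needed to apply the previous cases, after which the Hölder constant comes out as an absolute number (at most $8$).
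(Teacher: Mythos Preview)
Your proof is correct and follows essentially the same line as the paper's: items (1)--(3) are dispatched as trivial computations, and the H\"older estimate is obtained by using the Lipschitz bound $4/\mathcal{L}(J_n^k)^{1/2}$ on each tent together with the endpoints in $A$ to handle the cross-interval case. The only difference is that in the mixed case the paper assumes without loss of generality that $g(x)>g(y)$, bounds $|g(x)-g(y)|\le g(x)$ directly, and controls $g(x)$ via the single endpoint of $J_n^k$ lying between $x$ and $y$, which yields the constant $4$ rather than your $8$; this is a cosmetic sharpening, not a different argument.
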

\begin{proof}
Only the last statement is not trivial. If $x,y\in A$ there is nothing to prove.
If $x,y\in J_n^k$, $x\neq y$, then
$$
|g(x)-g(y)|\leq \frac{4}{\mathcal{L}(J_n^k)^{\frac{1}{2}}}|x-y|\leq
\frac{4}{|x-y|^{\frac{1}{2}}}|x-y|=4|x-y|^{\frac{1}{2}}.
$$
For all the other situations assume that $g(x)>g(y)$, that $x\in J_n^k$, and that
$z$ is the extreme of $J_n^k$ which lies between $x$ and $y$ (including the case $z=y$).
Then
$$
|g(x)-g(y)|\leq g(x)\leq \frac{4}{\mathcal{L}(J_n^k)^{\frac{1}{2}}}|x-z|\leq |x-z|^{\frac{1}{2}}
\leq |x-y|^{\frac{1}{2}}.
$$

\end{proof}

Next we define $f:[0,M]\to [0,1]$ as
$$
f(x)=\int_0^xg(t)dt.
$$
The following lemma summarizes the properties of $f$ that we require.

\begin{lemma}\label{L4}
The function $f:[0,M]\to [0,1]$ satisfies
\begin{enumerate}
  \item $f$ is $C^1$.
  \item $f'(x)=0$ if and only if $x\in A$.
  \item $f(A)=C$.
\end{enumerate}
\end{lemma}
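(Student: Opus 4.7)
\bigskip
\noindent\textbf{Proof plan.} Items (1) and (2) are almost immediate. Since Lemma~\ref{L3}(4) tells us $g$ is $\frac{1}{2}$-H\"older continuous (in particular continuous), the Fundamental Theorem of Calculus gives $f\in C^1$ with $f'=g$, which settles (1). Then $f'(x)=g(x)=0$ iff $x\in A$ by Lemma~\ref{L3}(2), settling (2).

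The substantive content is (3). The first step is to rewrite $f|_A$ as a sum. Fix $x\in A$. Because $g\ge 0$ vanishes on $A$, the integral $\int_0^x g(t)\,dt$ picks up a contribution only from those removed intervals $J_n^k$ lying entirely to the left of $x$, so by Lemma~\ref{L3}(3),
$$
f(x) \;=\; \sum_{\sup J_n^k\le x}\int_{J_n^k}g(t)\,dt \;=\; \sum_{\sup J_n^k\le x}\frac{1}{3^n} \;=\; \sum_{\sup J_n^k\le x}\mathcal{L}(I_n^k).
$$

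The second step is to align this sum with Proposition~\ref{P2} via the obvious correspondence between the two Cantor constructions. The sets $A\subset[0,M]$ and $C\subset[0,1]$ are built by the identical combinatorial recipe, removing at stage $n$ the middle open subinterval of each of the $2^{n-1}$ surviving components. Indexing $J_n^k$ and $I_n^k$ from left to right at each stage yields a canonical order-preserving bijection $\phi\colon A\to C$ with the property that, for every $n$ and $k$,
$$
\sup J_n^k \le x \;\Longleftrightarrow\; \sup I_n^k \le \phi(x).
$$
Combining this equivalence with the formula from the first step and Proposition~\ref{P2} gives
$$
f(x) \;=\; \sum_{\sup I_n^k \le \phi(x)}\mathcal{L}(I_n^k) \;=\; \phi(x),
$$
so $f|_A=\phi$ and therefore $f(A)=\phi(A)=C$.

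The only non-automatic point is the construction and monotonicity of $\phi$; this is where I expect the mild technical work to sit. It can be pinned down either by identifying both $A$ and $C$ with $\{0,1\}^{\mathbb{N}}$ through the stage-by-stage left/right choice at each removed interval and checking that this identification is order-preserving, or, more concretely, by induction on $n$: at each stage the $2^{n-1}$ intervals $J_n^k$ and $I_n^k$ sit in corresponding gaps of the two nested constructions, so "to the right of the $k$-th gap at level $n$" has the same meaning on either side. Once that is in hand, everything above is formal.
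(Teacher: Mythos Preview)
Your proof is correct. For (3), the paper takes a more economical route: rather than constructing an auxiliary order-preserving bijection $\phi:A\to C$, it observes that $f$ itself is strictly increasing and continuous on $[0,M]$, hence a bijection $[0,M]\to[0,1]$, so $f(A)=C$ reduces to checking $f(J_n^k)=I_n^k$ for every $n,k$. That in turn follows because $f(J_n^k)$ is an interval of length $\int_{J_n^k}g=3^{-n}$ by Lemma~\ref{L3}(3), and its left endpoint coincides with the left endpoint of $I_n^k$ by Proposition~\ref{P2}. The combinatorial correspondence you isolate as $\phi$ is implicitly present in this last step as well (the paper does not spell it out either), but letting $f$ play the role of the bijection on the ambient intervals spares you the separate construction of $\phi$ and the verification of its order-preserving property.
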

\begin{proof}
The first two properties are an immediate consequence of the Fundamental Theorem of Calculus. 
For the third one, we observe first that $f$ is one to one (it is a strictly increasing function),  hence
in order to obtain the result it is enough to prove that
$$
f(J_n^k)=I_n^k \quad \textrm{for every}\ n \ \textrm{and}\ k,
$$
and this follows since $f(J_n^k)$ is an interval of length $\frac{1}{3^n}$, by Lemma \ref{L3},
whose left extreme agrees with the left extreme of $I_n^k$ by Proposition \ref{P2}
\end{proof}

We are ready to set the example which is the goal of this note, that follows immediately from
Proposition \ref{P1} and Lemma \ref{L4}.

\begin{example}
The function $F:[0,M]^2\to [0,2]$ defined by $F(x,y)=f(x)+f(y)$ is a $C^{1,\frac{1}{2}}$ function
that satisfies that every $x\in [0,2]$ is a  critical value.
\end{example}
\begin{proof}
$\nabla F(x,y)=(0,0)$ if and only if $(x,y)\in A\times A$, and $F(A\times A)=f(A)+f(A)=C+C=[0,2]$
\end{proof}

\begin{rmk}
All the arguments required by this example still hold if we define 
$J_n^k$ of length $\big( 3^{\frac{1}{1+\alpha}}\big) ^{-n}$, with
$2<3^{\frac{1}{1+\alpha}}$, which is equivalent to $\alpha <\frac{\log 3/2}{\log 2}$,
and 
$$
g(x)=\frac{4}{\mathcal{L}(J_n^k)^{1-\alpha}}dist(x,\partial J_n^k).
$$ 
Then we obtain that $f$ is $\alpha$-H\"older, and consequently $F$ is $C^{1,\alpha}$.
\end{rmk}

\end{document}